\newtheorem{prethm}{{\bf Theorem}}[section]
\newenvironment{thm}{\begin{prethm}{\hspace{-0.5
em}{\bf.}}}{\end{prethm}}
\newtheorem{prepro}{{\bf Theorem}}
\newtheorem{precor}[prethm]{{\bf Corollary}}
\newenvironment{cor}{\begin{precor}{\hspace{-0.5
em}{\bf.}}}{\end{precor}}
\newtheorem{preconj}[prethm]{{\bf Conjecture}}
\newtheorem{preremark}[prethm]{{\bf Remark}}
\newtheorem{prelem}[prethm]{{\bf Lemma}}
\newenvironment{lem}{\begin{prelem}{\hspace{-0.5
em}{\bf.}}}{\end{prelem}}
\newtheorem{preque}[prethm]{{\bf Question}}
\newtheorem{preobserv}[prethm]{{\bf Observation}}
\newtheorem{predef}[prethm]{{\bf Definition}}
\newtheorem{preproposition}[prethm]{{\bf Proposition}}
\newtheorem{preproof}{{\bf Proof.}}
\newtheorem{preprooff}{{\bf Proof}}
\newenvironment{proof}[1]{\begin{preproof}{\rm
#1}\hfill{$\Box$}}{\end{preproof}}
\newtheorem{preproofs}{{\bf The second proof of }}
\newtheorem{preprooft}{{\bf Third proof of }}
\newtheorem{preproofF}{{\bf Proof of}}
\title{\bf\Large 
The existence of planar $4$-connected essentially $6$-edge-connected graphs with no claw-decompositions
}
\author{{\normalsize{\sc Morteza Hasanvand${}$} }\vspace{3mm}
\\{\footnotesize{${}$\it Department of Mathematical
 Sciences, Sharif
University of Technology, Tehran, Iran}}
{\footnotesize{}}\\{\footnotesize{ $\mathsf{morteza.hasanvand@alum.sharif.edu }$ }}}
\date{}
\begin{document}
\maketitle
\begin{abstract}{
In 2006 Bar{\'a}t and Thomassen conjectured that every planar $4$-edge-connected $4$-regular simple graph of size divisible by three admits a claw-decomposition. Later, Lai (2007) disproved this conjecture by a family of planar graphs with edge-connectivity $4$ which the smallest one contains $24$ vertices. In this note, we first give a smaller counterexample having only $18$ vertices and next construct a family of planar $4$-connected essentially $6$-edge-connected $4$-regular simple graphs of size divisible by three with no claw-decompositions. This result provides the sharpness for two known results which say that every $5$-edge-connected graph of size divisible by three admits a claw-decomposition if it is essentially $6$-edge-connected or planar.
\\
\\
\noindent {\small {\it Keywords}: Modulo orientation; claw-decomposition; star-decomposition; planar graph; edge-connectivity. }} {\small
}
\end{abstract}
%
%
%
%
%
%
%
%
%
%
\section{Introduction}
In this article, graphs have no loops, but multiple edges are allowed, and a simple graph 
have neither loops nor multiple edges.
Let $G$ be a graph. The vertex set, the edge set, and the maximum degree of vertices of $G$ are denoted by $V(G)$, $E(G)$, and $\Delta(G)$, respectively.
We denote by $d_G(v)$, $d^-_G(v)$, and $d^+_G(v)$, the degree, the in-degree, and the out-degree of a vertex $v$ in the graph $G$. For a vertex set $A$, we denote by $e_G(A)$ the number of edges with both ends in $A$.
An orientation of the graph $G$ is said to be 
{\it $p$-orientation}, if for each vertex $v$, $d_G^+(v)\stackrel{k}{\equiv}p(v) $,
 where $p:V(G)\rightarrow Z_k$ is a mapping and $Z_k$ is the cyclic group of order $k$.
For the zero function $p$, the graph $G$ has a $p$-orientation if and only if it admits a $k$-star-decomposition.
A graph is termed {\it essentially $\lambda$-edge-connected},
 if the edges of any edge cut of size strictly less than $\lambda$ are incident with a common vertex.

In 2006 Bar{\'a}t and Thomassen~\cite{Barat-Thomassen-2006}
 conjectured that every planar $4$-edge-connected $4$-regular simple graph $G$ of size divisible by $3$ admits a claw-decomposition.
 Later, Lai (2007)~\cite{Lai-2007} disproved this conjecture by a class of planar graphs with vertex-connectivity two.
Former, Lai and Li (2006) \cite{Lai-Li-2006} proved the following stronger assertion but for planar $5$-edge-connected graphs, in terms of $Z_3$-connectivity, 
using the duality of planar graphs with graph colouring. A directive proof of Theorem~\ref{intro:thm:planar} is found by Richter, Thomassen, and Younger (2016)~\cite{Richter-Thomassen-Younger-2016} and this theorem is recently developed to the projective planar graphs by Jong and Bruce~(2020)~\cite{Jong-Richter}. 
\begin{thm}{\rm(\cite{Lai-Li-2006})}\label{intro:thm:planar}
{Let $G$ be a planar graph and let $p:V(G)\rightarrow Z_3$ be a mapping with
$|E(G)| \stackrel{3}{\equiv} \sum_{v\in V(G)}p(v)$.
 If $G$ is $5$-edge-connected, then
it has a $p$-orientation.
}\end{thm}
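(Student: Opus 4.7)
The plan, following Lai and Li's original approach, is to pass to the planar dual and reduce the $p$-orientation question on $G$ to a colouring-type question on $G^{*}$. First I would fix a planar embedding of $G$ and form its dual $G^{*}$. Because minimal edge cuts of a plane graph correspond bijectively to cycles of its dual, the hypothesis that $G$ is $5$-edge-connected forces $G^{*}$ to have girth at least $5$.

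Next I would recast the existence of a $p$-orientation as a $Z_3$-flow problem with prescribed boundary. Fixing any reference orientation $D_{0}$ of $G$, an orientation of $G$ corresponds to a nowhere-zero function $f\colon E(G)\to Z_3$ (value $+1$ meaning ``keep the edge as in $D_{0}$'', $-1$ meaning ``reverse it''), and the requirement $d_G^{+}(v)\equiv p(v)\pmod 3$ translates into the boundary equation
\[
\partial f(v)\equiv 2p(v)-d_G(v)\pmod 3 \qquad\text{for every }v\in V(G).
\]
The divisibility hypothesis $|E(G)|\equiv\sum_{v}p(v)\pmod 3$ is precisely what is needed to ensure that these prescribed boundary values sum to $0$ in $Z_3$, which is the necessary compatibility condition for such an $f$ to exist.

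By Tutte's classical duality between $Z_3$-flows in a plane graph and $Z_3$-labellings of its dual, finding such a nowhere-zero boundary-prescribed flow on $G$ is equivalent to finding a map $c\colon V(G^{*})\to Z_3$ whose edge-differences $c(u)-c(v)$, shifted by a prescribed function on $E(G^{*})$ encoding the boundary, never vanish. In the unshifted case this reduces to an ordinary proper $3$-colouring of $G^{*}$, which exists by Gr\"otzsch's theorem since $G^{*}$ has girth at least $5$.

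The main obstacle is the general shifted case: one needs not only $3$-colourability of $G^{*}$ but the full strength of what is now called $Z_3$-connectivity for the dual, a strictly stronger property. I would close this gap by appealing to Thomassen's $3$-list-colouring theorem for planar graphs of girth at least $5$. After fixing a spanning tree of $G^{*}$ and absorbing the shift along tree edges into a vertex re-labelling, the remaining constraints on the cotree edges each forbid a single value of $c$ at one endpoint once $c$ at the other is chosen, which can be packaged as a list-colouring problem with three allowable colours at each vertex. Thomassen's theorem then delivers the required labelling, and dualising back produces the desired $p$-orientation of $G$.
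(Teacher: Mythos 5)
The paper states this theorem without giving a proof (it is cited to Lai--Li 2006), so there is no in-paper argument to compare against; I evaluate your sketch on its own terms. Your first three steps are sound: dualizing ($5$-edge-connectivity of $G$ does give girth at least $5$ for $G^{*}$), recasting $p$-orientability as a $Z_3$-flow with prescribed boundary $\partial f(v)\equiv 2p(v)-d_G(v)\pmod 3$ (the algebra is right, and the hypothesis $|E(G)|\equiv\sum_v p(v)\pmod 3$ is exactly the zero-sum compatibility), and dualizing once more to a shifted-colouring problem on $G^{*}$.

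The final paragraph, however, has a genuine gap. What is actually required is that $G^{*}$ be $Z_3$-\emph{group colourable}: for \emph{every} shift $\sigma\colon E(G^{*})\to Z_3$ there must exist $c\colon V(G^{*})\to Z_3$ with $c(u)-c(v)\ne\sigma(uv)$ on each (oriented) edge. This is not an instance of list colouring, and Thomassen's theorem that planar graphs of girth at least $5$ are $3$-choosable does not produce it. In a $3$-list-colouring instance each vertex has a fixed admissible set and every edge forbids the single pair $c(u)=c(v)$; in the group-colouring instance every vertex may use all of $Z_3$ but edge $uv$ forbids the shifted diagonal $c(u)=c(v)+\sigma(uv)$, which varies from edge to edge. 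Your spanning-tree normalization only zeroes the shifts on tree edges; cotree edges still carry arbitrary nonzero shifts. The observation that ``once $c(v)$ is chosen, one value of $c(u)$ is forbidden'' cannot be absorbed into a list at $u$, because the forbidden value depends on $c(v)$, so no list-colouring instance is obtained. Group colouring is a special case of correspondence (DP-)colouring, which is strictly stronger than list colouring, and one cannot invoke a list-colouring theorem to get it. Establishing $Z_3$-group-colourability of planar graphs of girth $\ge 5$ (equivalently, $Z_3$-connectivity of $5$-edge-connected planar graphs) is precisely the nontrivial content of the Lai--Li theorem; it needs its own discharging/induction, or the direct primal-side inductive proof of Richter, Thomassen, and Younger, and cannot be short-circuited by Thomassen's choosability result.
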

\begin{cor}{\rm (see Theorem 4.2 in~\cite{Richter-Thomassen-Younger-2016}}\label{cor:claw}
{Every $5$-edge-connected planar graph of size divisible by $3$ admits a claw-decomposition.
}\end{cor}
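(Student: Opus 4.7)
The plan is to derive the corollary directly from Theorem~\ref{intro:thm:planar} by taking $p$ to be the zero map $V(G)\to Z_3$. With this choice, the hypothesis of Theorem~\ref{intro:thm:planar} reduces to $|E(G)|\stackrel{3}{\equiv}0$, which is precisely the assumption that the size of $G$ is divisible by three. Since $G$ is also planar and $5$-edge-connected, Theorem~\ref{intro:thm:planar} yields an orientation in which every vertex $v$ satisfies $d^+_G(v)\stackrel{3}{\equiv}0$.

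Next I would invoke the equivalence noted in the introduction: having a $p$-orientation for the zero function is equivalent to possessing a $3$-star-decomposition. To make this completely explicit (and because this is the only place any real content enters), I would give the standard one-line construction. At each vertex $v$, partition its $d^+_G(v)$ outgoing edges into $d^+_G(v)/3$ arbitrary triples; for each such triple, form a claw with center $v$ and the three chosen out-edges as leaves. Since every oriented edge $uv$ is an out-edge of its tail $u$ and of no other vertex, each edge of $G$ lies in exactly one of these claws. Thus the claws partition $E(G)$, producing the desired claw-decomposition.

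There is no real obstacle here, since the statement is essentially a specialization of Theorem~\ref{intro:thm:planar}; the only things to verify are the congruence condition on $|E(G)|$ and the routine translation from a $Z_3$-valued out-degree condition to a partition of out-edges into claws. The substantive input, namely the existence of the modulo-$3$ orientation under $5$-edge-connectivity and planarity, is supplied entirely by Theorem~\ref{intro:thm:planar}.
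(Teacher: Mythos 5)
Your proof is correct and matches the paper's intended derivation: the paper states this corollary immediately after Theorem~\ref{intro:thm:planar} precisely because it follows by specializing $p$ to the zero map and then invoking the equivalence (noted in the introduction) between a $p$-orientation for the zero function and a $3$-star (i.e.\ claw) decomposition. Your explicit one-line construction of claws from the out-edges is the standard verification of that equivalence and is accurate.
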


In 2012 Thomassen \cite{Thomassen-2012} developed Theorem~\ref{intro:thm:planar} to $8$-edge-connected graphs and succeeded to confirm another beautiful conjecture proposed by Bar{\'a}t and Thomassen~\cite{Barat-Thomassen-2006} about the existence of claw-decomposition in graphs with high enough edge-connectivity. Next, Lov{\'a}sz, Thomassen, Wu, and Zhang (2013) refined Thomasen's result by replacing the needed edge-connectivity $6$.
In particular, they proved a more stronger version which contains the following result as a corollary on
essentially edge-connected graphs.
\begin{thm}{\rm (\cite{Lovasz-Thomassen-Wu-Zhang-2013}, see Theorem 1.1 in \cite{Delcourt-Postle-2018})}\label{intr:thm:claw}
{Every $5$-edge-connected essentially $6$-edge-connected graph of size divisible by $3$ admits a claw-decomposition.
}\end{thm}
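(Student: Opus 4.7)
The plan is to derive the theorem as an immediate corollary of the $Z_3$-modulo-orientation result of Lov\'asz, Thomassen, Wu, and Zhang. The first step is to translate claw-decompositions into an orientation condition. If $G$ admits a claw-decomposition and each claw is oriented from its center toward its three leaves, then at every vertex $v$ the out-degree equals three times the number of claws centered at $v$, so $d^+_G(v)\stackrel{3}{\equiv}0$. Conversely, given any orientation with $d^+_G(v)\stackrel{3}{\equiv}0$ at every vertex, we may partition the outgoing edges at each $v$ into triples; each triple forms a claw centered at $v$, yielding a claw-decomposition of $G$. Hence the theorem reduces to producing a $p$-orientation of $G$ for the zero map $p:V(G)\to Z_3$.

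The second step is to invoke the non-planar analogue of Theorem~\ref{intro:thm:planar} proved in~\cite{Lovasz-Thomassen-Wu-Zhang-2013}: every $5$-edge-connected, essentially $6$-edge-connected graph $G$ and every $p:V(G)\to Z_3$ with $\sum_{v\in V(G)}p(v)\stackrel{3}{\equiv}|E(G)|$ admits a $p$-orientation. Taking $p\equiv 0$, the compatibility condition becomes $|E(G)|\stackrel{3}{\equiv}0$, which is precisely our hypothesis. Combining the resulting $0$-orientation with the equivalence established in the first step delivers the desired claw-decomposition.

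The real obstacle, of course, lies in the underlying modulo-orientation theorem rather than in this reduction. Its proof is a delicate induction on $|V(G)|+|E(G)|$ in which one locates a small ``contractible'' configuration (for example, a carefully chosen edge, a $Z_3$-connected subgraph, or a small cut) whose contraction preserves both the $5$-edge-connectivity and the essential $6$-edge-connectivity, obtains a $p'$-orientation of the quotient for an appropriately adjusted boundary $p'$, and then lifts the orientation back. The hypothesis of essential $6$-edge-connectivity is exactly what is needed to confine every $5$-edge-cut to the trivial cut around a degree-$5$ vertex, where the local constraint $d^+_G(v)\stackrel{3}{\equiv}p(v)$ can always be repaired by flipping a single incident edge. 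The most technical bookkeeping is to verify that the contractions simultaneously respect the global divisibility condition on $|E|-\sum p(v)$ and the local essential-$6$-edge-connectivity hypothesis throughout the induction.
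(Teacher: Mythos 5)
The paper does not prove this statement at all: it is imported verbatim as a cited result of Lov\'asz--Thomassen--Wu--Zhang, via Delcourt--Postle's Theorem~1.1, so there is no paper proof to compare against. Your first two paragraphs do, however, reproduce exactly the (implicit) derivation the paper relies on: the equivalence between a claw-decomposition and a $0$-orientation is the same one the paper records in the introduction (``for the zero function $p$, the graph $G$ has a $p$-orientation if and only if it admits a $k$-star-decomposition''), and invoking the LTWZ modulo-$3$ orientation theorem for $5$-edge-connected essentially $6$-edge-connected graphs with $p\equiv 0$ is precisely what the citation amounts to. So up to that point you are doing the same thing the paper does, just with the reduction written out.

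Two cautions are worth flagging. First, you state the LTWZ corollary for \emph{arbitrary} $p$ on a $5$-edge-connected essentially $6$-edge-connected graph; the paper (and Delcourt--Postle) only need, and only assert, the $p\equiv 0$ case, and you should check the literature before claiming the stronger form. Second, your third paragraph is not a proof of the LTWZ theorem but a loose gloss on its method, and it contains a genuinely false local claim: at a degree-$5$ vertex ``flipping a single incident edge'' changes $d^+(v)$ only by $\pm 1 \pmod 3$, so it cannot always repair the residue, and in any case flipping an edge perturbs the residue at the other endpoint as well, so there is no purely local repair. None of this affects the validity of your first two paragraphs, since you are citing LTWZ rather than reproving it, but the sketch should not be mistaken for an argument.
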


In this note, we show that Bar{\'a}t and Thomassen's conjecture~\cite{Barat-Thomassen-2006} does not hold 
 in planar $4$-connected essentially $6$-edge-connected simple graphs by giving a new family of such $4$-regular graphs with no claw-decompositions. This shows that (i) the needed edge-connectivity in Corollary~\ref{cor:claw} is best possible even for
 essentially $6$-edge-connected graphs and (ii) the needed edge-connectivity in Theorem~\ref{intr:thm:claw} is best possible even in planar graphs. In Section~\ref{sec:star}, we also give another family of graphs with high essential edge-connectivity but with no 
$k$-star-decompositions, and provide a useful criterion for the existence of $k$-star-decompositions in graphs with maximum degree at most $2k-1$; in particular, the existence of claw-decompositions in $4$-regular graphs. 
%
%
%
%
%
%
%
%
%
%
%
%
%
\section{Claw-decompositions in $4$-regular graphs}
In 1992 Jaeger, Linial, Payan, and Tarsi~\cite{Jaeger-Linial-Payan-Tarsi-1992} constructed a $4$-edge-connected $4$-regular simple graph of order $12$ with no claw-decompositions (the third graph in Figure~\ref{Figure:12}). 
By a computer search, we observe that there are only four $4$-regular connected simple graphs of order $12$ with no claw-decompositions using a regular generator due to Meringer (1999)~\cite{Meringer-1999}. 
In addition, we observe that there are only $146$ graphs without claw-decompositions among $4$-regular connected simple graphs of order $15$ which is less than $0.02\%$ of them. Also, there are only $15932$ such graphs among $4$-regular connected simple graphs of order $18$ which is less than $0.002\%$ of them. Recently, Delcourt and Postle (2018)~\cite{Delcourt-Postle-2018} proved that this ratio must tend to zero.
\begin{figure}[h]
{\centering
\includegraphics[scale=1.15]{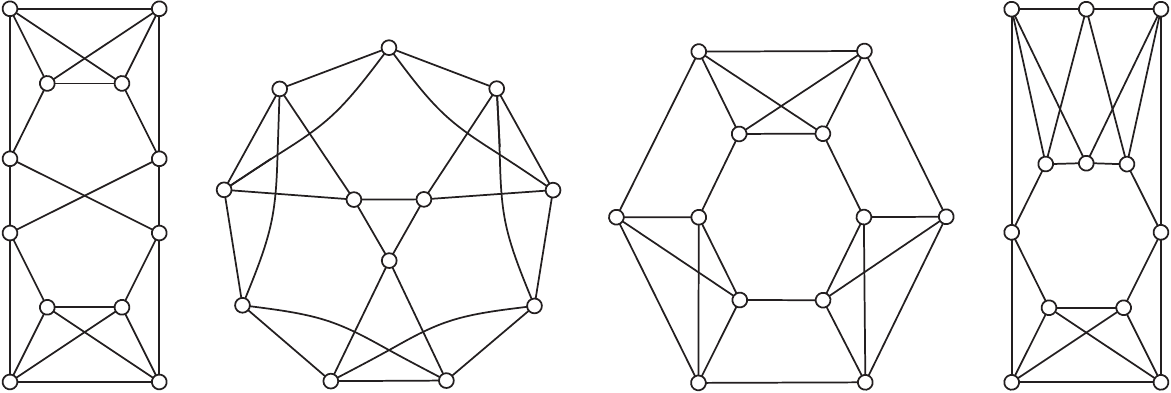}
\caption{All $4$-regular connected simple graphs of order $12$ with no claw-decompositions.}
\label{Figure:12}
}\end{figure}

For planar graphs, Lai (2007)~\cite{Lai-2007} introduced a family of planar $2$-connected $4$-edge-connected $4$-regular simple graphs of order $12$ with no claw-decompositions which the smallest one contains $24$ vertices. By a computer search (using a planar graph generator due to Brinkmann and McKay~\cite{Brinkmann-McKay-2007}), we observe the there is a smaller such planar graph containing only $18$ vertices that illustrated in Figure~\ref{Figure:18}.
\begin{figure}[h]
{\centering
\includegraphics[scale=.77]{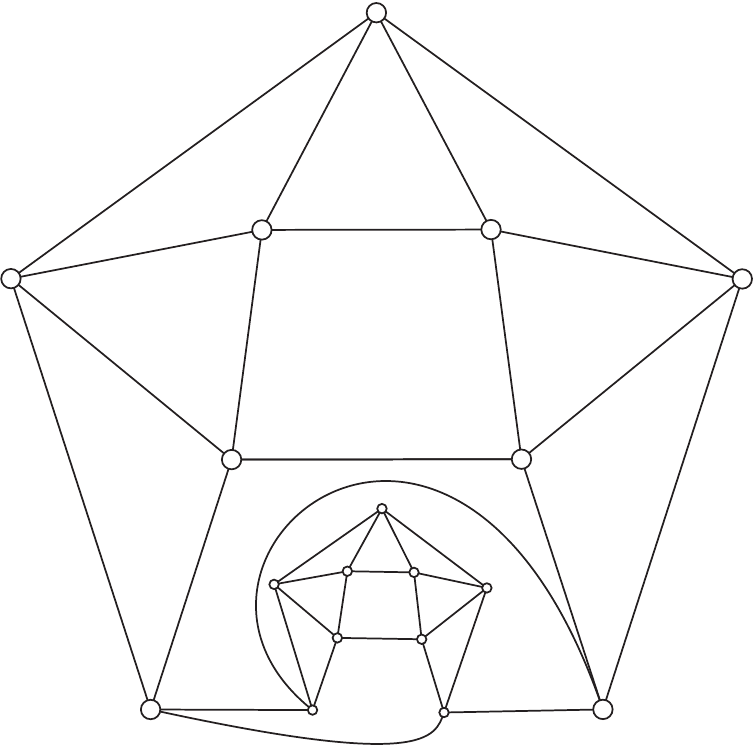}
\caption{A planar $2$-connected $4$-regular graph of order $18$ with no claw-decompositions.}
\label{Figure:18}
}\end{figure}

Motivated by Theorem~\ref{intr:thm:claw}, one may ask whether there is such a planar graph with higher essential edge-connectivity or even vertex-connectivity. By searching among planar $3$-connected $4$-regular graphs of order $21$,
 we discover a number of such desired planar graphs. Among them, some ones meet vertex-connectivity $4$ and some ones meet essential edge-connectivity $6$; for example, see Figure~\ref{Figure:18}.
According to Corollary~\ref{cor:claw:criterion}, one can easily prove these graphs do not have a claw-decomposition 
using independent sets.
For instance, the graph in Figure~\ref{Figure:18} its independence number is $5$, 
the right graph in Figure~\ref{Figure:21} its independence number is $6$, 
and the left graph Figure~\ref{Figure:21} has a unique independent set of size $7$ (up to isomorphism) such that by removing it the resulting graph has a component with two cycles.
\begin{figure}[h]
{\centering
\includegraphics[scale=1.42]{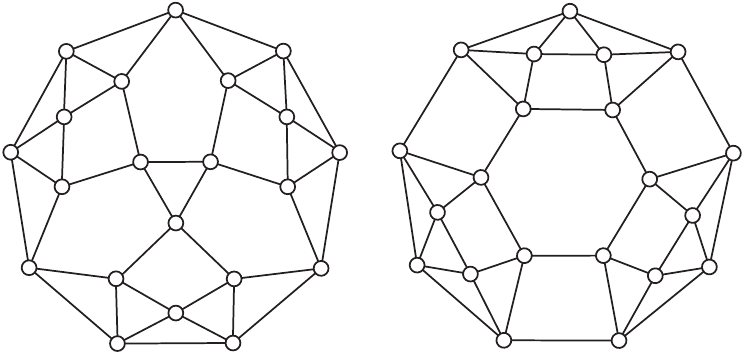}
\caption{Two planar $3$-connected $4$-regular graphs of orders $21$ with no claw-decompositions.}
\label{Figure:21}
}\end{figure}

It remains to decide whether Theorem~\ref{intro:thm:planar} holds for planar $4$-connected essentially $6$-edge-connected graphs, except for a finite number of graphs. We show that the answer is surprisingly false by the following graph construction.
\begin{thm}\label{thm:main}
{There are infinitely many 
planar $4$-connected essentially $6$-edge-connected $4$-regular simple graphs
 of size divisible by $3$ 
 with no claw-decompositions.
}\end{thm}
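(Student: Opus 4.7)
The plan is to exhibit an infinite sequence $\{G_n\}_{n\ge 1}$ of planar $4$-regular simple graphs of size divisible by $3$ and verify that (i) each $G_n$ is $4$-connected and essentially $6$-edge-connected; (ii) the graphs $G_n$ are pairwise non-isomorphic; (iii) no $G_n$ admits a claw-decomposition. For item (iii) I would use Corollary~\ref{cor:claw:criterion}, which specialises in the $4$-regular case to the following statement: such a graph has a claw-decomposition if and only if there is an independent set $S\subset V(G)$ with $|S|=|V(G)|/3$ such that every vertex of $V(G)\setminus S$ has exactly two neighbors in $S$, equivalently, $G-S$ is $2$-regular. (Indeed, a claw-decomposition of a $4$-regular graph corresponds to an orientation with $d^+\in\{0,3\}$ at every vertex; the $d^+=0$ vertices form the required set $S$.)

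For the construction I would start from a small planar $4$-regular ``seed'' $H_0$ (for instance one of the $21$-vertex graphs of Figure~\ref{Figure:21}) and design a planar $4$-regular ``gadget'' with four distinguished boundary half-edges arranged along a face. The graph $G_n$ is then built by splicing $n$ copies of the gadget into $H_0$ along $n$ pairwise-independent outer-face edges, cutting each such edge and reconnecting via the gadget's boundary so that $4$-regularity and planarity are preserved. Choosing the number of interior vertices of the gadget to be a multiple of $3$ makes $|V(G_n)|\equiv 0\pmod 3$ automatic, and because $|V(G_n)|$ grows linearly with $n$, the graphs $G_n$ are pairwise non-isomorphic.

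Checking $4$-connectivity and essential $6$-edge-connectivity amounts to a local check at each splice. Since every edge cut in a $4$-regular graph has even size, a non-trivial edge cut of size strictly less than $6$ in a $4$-regular simple graph can only come from a $K_4$ subgraph or a similarly dense small vertex set, and these configurations will be ruled out by the design of the gadget together with the choice of the attachment edges in $H_0$. Applied once and for all inside $H_0$, the same analysis also gives $4$-connectivity and essential $6$-edge-connectivity of $G_n$.

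The heart of the argument is ruling out the independent set $S$ of the criterion. I would analyse, inside a single copy of the gadget, the finitely many possibilities for the restriction of $S$ to the gadget's vertices, parameterised by how $S$ meets the four boundary vertices, and show that in every case either the independence of $S$ or the local equation $|N(v)\cap S|=2$ for some interior vertex $v\notin S$ fails. The main obstacle is to engineer the gadget so that this local obstruction survives every possible boundary pattern: \emph{a priori} $S$ could use the four boundary edges to route around a forbidden configuration inside the gadget, so the gadget must be rigid enough that every admissible boundary restriction already forces a fixed local deficit. A secondary difficulty is preserving essential $6$-edge-connectivity through the splicing, which forces the four boundary half-edges of the gadget to lie in pairwise distant parts of its interior so that no new small non-trivial cut is created across the interface.
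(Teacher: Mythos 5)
Your outline shares the paper's basic strategy --- build an infinite family by repeating a planar gadget, then exploit the observation that a claw-decomposition of a $4$-regular graph forces an independent set of size $|V(G)|/3$ --- but it stops short of a proof: the gadget is never constructed, and the three claims you flag as ``the heart of the argument,'' ``the main obstacle,'' and ``a secondary difficulty'' (ruling out the boundary patterns of $S$, engineering the rigidity, and preserving essential $6$-edge-connectivity across splices) are exactly the content that a proof must supply. The paper actually does this work: it gives an explicit $16$-vertex planar block, forms $G_{48n}$ by linking $3n$ copies cyclically with three inter-block edges each, and then kills the independent set by a clean per-block count (any independent set meets a block in at most $5$ vertices, so $|X|\le 15n<16n=|V(G_{48n})|/3$). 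That counting step replaces your proposed case analysis over boundary patterns and is considerably simpler, because it needs no information about how $S$ interacts with the inter-block edges.

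There is also a substantive error in your reformulation of Corollary~\ref{cor:claw:criterion}. You claim the criterion is equivalent to ``$G-S$ is $2$-regular,'' but the correct condition (as stated in the paper) is that every component of $G-S$ is unicyclic, i.e.\ $G-S$ admits an orientation with in-degree exactly $1$ at each vertex. Since $S$ is independent of size $|V(G)|/3$ in a $4$-regular graph, $G-S$ automatically has $|V(G-S)|$ edges, but a vertex of $G-S$ may have degree $1$ or $3$ there (it just cannot have degree $0$); a cycle with pendant trees attached is allowed. Your strengthened local condition $|N(v)\cap S|=2$ is therefore not a valid obstruction: a gadget designed to violate it could still admit a claw-decomposition, so the nonexistence argument you sketch would not go through even if the construction were filled in. If you only use the one direction you actually need --- no independent set of size $|V(G)|/3$ implies no claw-decomposition --- the mischaracterization is harmless, but the case analysis you describe explicitly invokes the false equivalence.
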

\begin{proof}
{Consider $3n$ copies of the graph in Figure~\ref{Figure:Block} 
\begin{figure}[h]
{\centering
\includegraphics[scale=1.05]{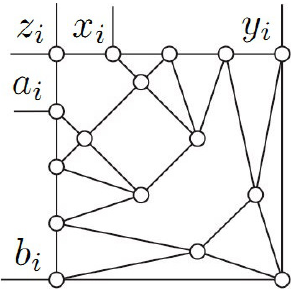}
\caption{The block for constructing the family of graphs $G_{48n}$.}
\label{Figure:Block}
}\end{figure}
and for every $i\in Z_{3n}$, add three edges 
$z_iz_{i+1}$,
 $x_{i}a_{i+1}$, and
$y_ib_{i+1}$ to the new graph.
Call the resulting graph $G_{48n}$ which has $48n$ vertices.
 Figure~\ref{Figure:48} illustrates the graph $G_{48}$ in which each $z_i$ lies in the outer-face.
As observed in \cite{Barat-Thomassen-2006, Lai-2007},
 if a $4$-regular graph $G$ has a claw-decomposition, 
then the non-center vertices form an independent set of size $|V(G)|/3$.
If $G_{48n}$ has a claw-decomposition, then it must have an independent set $X$ of size $16n$.
But $X$ includes at most $5$ vertices from every block and hence it has at most $15n$ vertices which is a contradiction.
The vertex connectivity and essentially edge-connectivity of $G_{48n}$ can easily be verified.
The proof is left to the reader.
}\end{proof}
\begin{figure}[h]
{\centering
\includegraphics[scale=1.5]{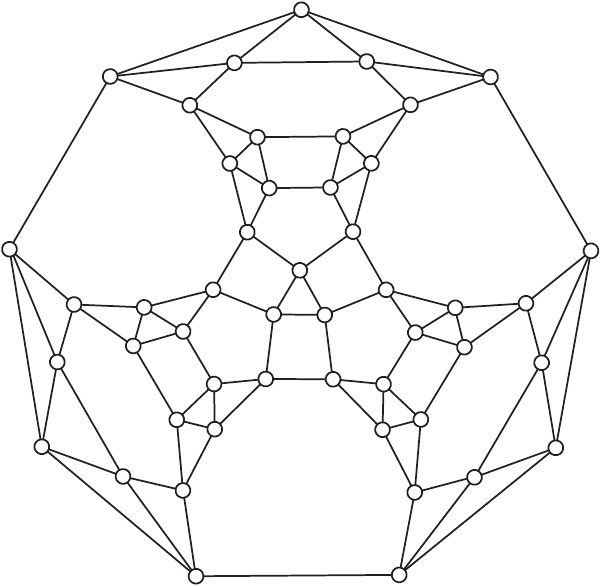}
\caption{A planar $4$-connected essentially $6$-edge-connected $4$-regular graph of order $48$ 
 with no claw-decompositions.}
\label{Figure:48}
}\end{figure}
\section{Graphs with high essential edge-connectivity and without $k$-satr-decompositions}
\label{sec:star}
It is known that every $(2k-1)$-edge-connected essentially $(3k-3)$-edge-connected graph $G$ of size is divisible by $k$ with $k\ge 3$ admits a $k$-star-decomposition, and there are $(2k-2)$-edge-connected $(2k-2)$-regular graphs of size divisible by $k$ with no $k$-star-decompositions, see \cite{Delcourt-Postle-2018, ModuloBounded, Lovasz-Thomassen-Wu-Zhang-2013}.
Motivated by Theorem~\ref{thm:main}, we are going to show that there are such regular graphs
 with the highest essential edge-connectivity but without $k$-star-decompositions.
\begin{thm}
{Fot any integer $k$ with $k\ge 3$, there are infinitely many 
$(2k-2)$-connected essentially $(4k-6)$-edge-connected $(2k-2)$-regular simple graphs of size divisible by $k$ 
 with no $k$-star-decompositions.
}\end{thm}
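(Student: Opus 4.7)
The plan is to reduce the problem to producing $(2k-2)$-regular graphs of small independence number, and then exhibit such graphs explicitly. The reduction is standard: in any $k$-star-decomposition of a $(2k-2)$-regular graph $G$, each vertex $v$ is the center of $c_v$ stars and a leaf of $\ell_v$ stars with $c_v k + \ell_v = 2k-2$; since $k \ge 3$, this forces $c_v \in \{0,1\}$. Counting shows that exactly $|V(G)|/k$ vertices have $c_v = 0$, and any edge joining two such non-centers would lie in a star whose center is one of them, a contradiction. Hence the non-centers form an independent set of size $|V(G)|/k$, so it suffices to construct $(2k-2)$-regular, $(2k-2)$-connected, essentially $(4k-6)$-edge-connected simple graphs of order divisible by $k$ with independence number strictly less than $|V(G)|/k$.

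For $k = 3$, Theorem~\ref{thm:main} already supplies infinitely many such graphs. For $k \ge 4$, the plan is to take $G_n$ to be the Cartesian product of $K_{2k-3}$ with the $n$-cycle $C_n$: vertices are pairs $(v,i)$ with $v \in V(K_{2k-3})$ and $i \in \mathbb{Z}_n$, and $(v,i)(v',i')$ is an edge iff either $i=i'$ and $v \ne v'$, or $v=v'$ and $|i-i'| \equiv 1 \pmod n$. Choose $n$ with $k \mid (2k-3)n$; infinitely many such $n$ exist since $\gcd(k, 2k-3) = \gcd(k,3)$. Then $G_n$ is a simple $(2k-2)$-regular graph of order $(2k-3)n$. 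A standard result on the connectivity of Cartesian products combined with $\delta(G_n)=2k-2$ yields $\kappa(G_n) = 2k-2$. At most one vertex can be chosen from each $K_{2k-3}$-layer in any independent set, and a cyclic assignment of such choices with no two consecutive equal is easily produced (since $2k-3 \ge 3$), so $\alpha(G_n) = n$. The inequality $n < (2k-3)n/k$ holds precisely when $k > 3$, giving $\alpha(G_n) < |V(G_n)|/k$ as required.

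The chief remaining step is to verify the essential $(4k-6)$-edge-connectivity of $G_n$. For $S \subseteq V(G_n)$, writing $s_i$ for the number of vertices of $S$ in the $i$-th $K_{2k-3}$-layer and $S^v \subseteq \mathbb{Z}_n$ for the set of levels where the fiber over $v$ meets $S$, a direct count of boundary edges gives $|\partial S| = \sum_i s_i(2k-3-s_i) + \sum_v |\partial_{C_n} S^v|$, a sum of two non-negative terms. When each $s_i \in \{0, 2k-3\}$ and $\emptyset \ne S \ne V(G_n)$, the cyclic boundary term alone contributes at least $2(2k-3) = 4k-6$. When some layer is partial, one handles the cases $|S| \in \{2,3\}$ by applying the degree bound $|\partial S| \ge |S|(2k-1-|S|)$ (whose minimum over $|S| \in [2, 2k-3]$ is exactly $4k-6$, attained at $|S|=2$ and $|S|=2k-3$), and for larger $|S|$ combines the first-term bound (each partial layer contributes at least $2k-4$) with the contribution of at least $2$ from each partial fiber to conclude $|\partial S| \ge 4k-6$. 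Packaging this case analysis uniformly is the main obstacle; once it is done, every edge cut of size strictly less than $4k-6$ must isolate a single vertex, confirming essential $(4k-6)$-edge-connectivity and completing the construction.
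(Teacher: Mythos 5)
Your proposal is correct and follows essentially the same route as the paper: the same reduction showing that a $k$-star-decomposition of a $(2k-2)$-regular graph forces an independent set of size $|V(G)|/k$, and the same Cartesian-product construction $K_{2k-3}\,\square\,C_n$ (the paper takes $n$ to be a multiple of $k$; you allow any $n$ with $k\mid(2k-3)n$, which is the same family up to that minor generalization), with the contradiction coming from the independence number bound $\alpha\le n<(2k-3)n/k$. The only substantive difference is that you sketch the verification of essential $(4k-6)$-edge-connectivity, which the paper explicitly leaves to the reader.
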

\begin{proof}
{We may assume that $k\ge 4$ as the assertion holds by 
Theorem~\ref{thm:main} for the special case $k=3$. 
 Take $G$ to be the Cartesian product of the cycle of order $kn$ and the complete graph of order $2k-3$, where $n$ is an arbitrary positive integer. It is not hard to check that $G$ is $(2k-2)$-connected essentially $(4k-6)$-edge-connected 
$(2k-2)$-regular simple graph of size divisible by $k$. We claim that $G$ has no $k$-star-decompositions.
Otherwise, the number of stars must be $(1-1/k)|V(G)|$, since $G$ contains $(k-1)|V(G)|$ edges.
Thus the number of non-center vertices must be $|V(G)|/k$ and these vertices form an independent set of $G$.
On the other hand, according to the construction, the graph $G$ whose independence number is at most $|V(G)|/(2k-3)$. Since $2k-3 > k$, we arrive at a contradiction.
}\end{proof}
In the following, we are going to present a helpful criterion for the existence of $k$-star-decompositions in terms of independent sets. For this purpose, we need the following well-known theorem due to Hakimi~(1965).
\begin{lem}{\rm (\cite{Hakimi-1965})}\label{lem:Hakimi}
{Let $G$ be a graph and let $p$ be an integer-valued function on $V(G)$.
Then $G$ has an orientation such that for all $v\in V(G)$, $d^-_G(v)\le p(v)$, if and only if for all $S\subseteq V(G)$,
$$ e_G(S)\le \sum_{v\in S}p(v),$$ 
}\end{lem}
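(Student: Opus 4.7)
My plan is to prove the necessity by a direct counting argument, and the sufficiency by translating the existence of the required orientation into a matching problem on an auxiliary bipartite graph, which is then resolved via Hall's marriage theorem.

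For the necessity, I would fix any orientation satisfying $d_G^-(v) \leq p(v)$ for all $v$ and take any $S \subseteq V(G)$. Each edge with both endpoints in $S$ must have its head in $S$, so it contributes exactly $1$ to $\sum_{v \in S} d_G^-(v)$. Therefore $e_G(S) \leq \sum_{v \in S} d_G^-(v) \leq \sum_{v \in S} p(v)$. In particular, applied to singletons $S = \{v\}$, the hypothesis forces $p(v) \geq 0$ for every $v$ in the loopless setting assumed in the paper, a fact I shall use in the converse direction.

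For the sufficiency, I would construct a bipartite graph $H$ whose parts are $E(G)$ and $V^{*} = \bigcup_{v \in V(G)} \{v^1, \ldots, v^{p(v)}\}$, where each $e = uv \in E(G)$ is adjacent in $H$ to every copy of $u$ and every copy of $v$. A matching of $H$ saturating the part $E(G)$ yields an orientation of $G$ as follows: orient $e = uv$ toward the endpoint whose copy is matched to $e$. Under this orientation, at most $p(v)$ edges have their head at $v$, since only $p(v)$ copies of $v$ exist in $V^{*}$, so the in-degree bound holds automatically.

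It remains to verify Hall's condition for the part $E(G)$ in $H$. Given any $F \subseteq E(G)$, let $T \subseteq V(G)$ be the set of vertices incident to some edge of $F$; then every edge of $F$ lies in $G[T]$, so $|F| \leq e_G(T)$. The neighbourhood of $F$ in $H$ consists of precisely the copies of the vertices in $T$, hence $|N_H(F)| = \sum_{v \in T} p(v)$. Applying the hypothesis with $S = T$ then gives $|N_H(F)| \geq e_G(T) \geq |F|$, so Hall's theorem produces the required matching and hence the desired orientation. I do not anticipate a serious obstacle; the only subtlety is arranging the bipartite incidence structure so that Hall's condition on $H$ matches the given inequality on the nose, which is exactly what the above construction delivers.
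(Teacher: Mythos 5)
The paper itself does not prove Lemma~\ref{lem:Hakimi}; it is cited to Hakimi (1965) and used as a black box. So there is no in-paper argument to compare against. Your proof, however, is correct and complete. The necessity direction is the standard double count: every edge inside $S$ forces one unit of in-degree at some vertex of $S$, and you correctly observe that the singleton case $S=\{v\}$ gives $p(v)\ge 0$ in the loopless setting, which is exactly what is needed for the vertex copies $v^1,\dots,v^{p(v)}$ to be well defined in the converse. The sufficiency via the auxiliary bipartite graph on $E(G)$ versus vertex copies, with Hall's condition checked by taking $T$ to be the set of endpoints of $F$ and noting $|F|\le e_G(T)$ and $|N_H(F)|=\sum_{v\in T}p(v)$, is airtight; in particular your identification of $N_H(F)$ with the copies of vertices in $T$ is exact, even when some $p(v)=0$.

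One remark on approach: your Hall/matching argument is one of the two standard routes to this theorem; the other is via max-flow/min-cut (send a unit of flow along each edge into one of its two endpoints, cap the sink arcs at $p(v)$, and read off the min cut). Hakimi's original argument is essentially of the flow/augmenting-path type, so your matching formulation is a mild repackaging rather than a fundamentally different proof, but it has the advantage of making the deficiency condition $e_G(S)\le\sum_{v\in S}p(v)$ fall out literally as Hall's condition, which is as transparent as one could hope for.
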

Now, are ready to prove the next assertion.
\begin{thm}\label{thm:criterion}
{Let $G$ be a graph of size divisible by $k$ satisfying $\Delta(G)\le 2k-1$ which $k$ is an integer number with $k\ge 3$. 
Then $G$ admits a $k$-star-decomposition if and only if it has an independent set $S$ of size $|V(G)|-\frac{1}{k}|E(G)|$ such that for every $A\subseteq V(G)\setminus S$, $$e_G(A)\le \sum_{v\in V(G)\setminus S}(d_G(v)-k).$$
}\end{thm}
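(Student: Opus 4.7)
The strategy is to recast the existence of a $k$-star-decomposition as an orientation problem on the subgraph induced by the centres, and then apply Hakimi's theorem (Lemma~\ref{lem:Hakimi}). Throughout, interpret the right-hand side in the statement as $\sum_{v\in A}(d_G(v)-k)$, which is the content the proof will establish.

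For the forward direction, suppose $G$ admits a $k$-star-decomposition. Each vertex serving as a centre ``consumes'' $k$ of its incident edges, and since $\Delta(G)\le 2k-1<2k$, no vertex is the centre of two distinct stars. Let $C$ be the set of centres and $S=V(G)\setminus C$. Then $|C|$ equals the number of stars, namely $|E(G)|/k$, so $|S|=|V(G)|-|E(G)|/k$, and since every edge runs from a centre to a leaf, $S$ is independent. Orient every edge of $G[C]$ toward the endpoint that is its star's centre. A direct count gives $d^+_{G[C]}(v)=d_G(v)-k$ for each $v\in C$, since the $k$ edges of $v$'s star are exactly the $d_S(v)$ edges from $v$ into $S$ together with the $k-d_S(v)$ arcs of $G[C]$ pointing into $v$. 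Reversing arcs and applying Lemma~\ref{lem:Hakimi} then yields $e_G(A)\le\sum_{v\in A}(d_G(v)-k)$ for every $A\subseteq C$.

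For the converse, let $S$ be an independent set of the prescribed size and set $C:=V(G)\setminus S$. A short computation using $|C|=|E(G)|/k$ and $e_G(S)=0$ gives $\sum_{v\in C}(d_G(v)-k)=e_G(C)$. Applying the hypothesis with $A=\{v\}$ yields $d_G(v)\ge k$, and with $A=C\setminus\{v\}$ (together with the equality above) yields $d_S(v)\le k$. By Lemma~\ref{lem:Hakimi} in its out-degree form (reverse arcs), $G[C]$ admits an orientation with $d^+(v)\le d_G(v)-k$ for every $v\in C$; since the sum of the upper bounds equals the total number of arcs $e_G(C)$, every inequality must be tight, so $d^+(v)=d_G(v)-k$ and $d^-(v)=k-d_S(v)\ge 0$. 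For each $v\in C$, form a star centred at $v$ whose leaves are the $d_S(v)$ neighbours of $v$ in $S$ together with the $k-d_S(v)$ in-neighbours of $v$ in $G[C]$. Each such star has exactly $k$ edges, and the stars partition $E(G)$ because every $S$--$C$ edge is assigned to its unique $C$-endpoint and every edge of $G[C]$ is assigned to its unique head.

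The main technical point is the translation between the star decomposition and the oriented graph $G[C]$. The hypothesis $\Delta(G)\le 2k-1$ is used exactly once and crucially, namely to guarantee that no vertex can be the centre of two stars, so that the complement of the centre set is an independent set of the prescribed size. The other subtle ingredient is the identity $\sum_{v\in C}(d_G(v)-k)=e_G(C)$, which forces Hakimi's bounds to be saturated at every vertex and thereby delivers exact star sizes rather than merely upper bounds.
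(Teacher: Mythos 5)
Your proof is correct and follows essentially the same strategy as the paper's: translate a $k$-star-decomposition into an orientation of $G[C]$ where $C$ is the set of centres, apply Hakimi's theorem (Lemma~\ref{lem:Hakimi}), and use the identity $\sum_{v\in C}(d_G(v)-k)=e_G(C)$ to force the in-degree bounds to be saturated and thereby obtain stars of exactly $k$ edges. You also correctly read the displayed condition as $e_G(A)\le\sum_{v\in A}(d_G(v)-k)$, repairing a typographical slip in the statement (the paper's own proof, via Hakimi, uses precisely this per-subset bound).
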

\begin{proof}
{First assume that there is an independent set $S$ of size $|V(G)|-|E(G)|/k$ satisfying the theorem. 
By Lemma~\ref{lem:Hakimi}, there is an orientation for $G\setminus S$ such that every vertex of it 
has in-degree at most $d_G(v)-k$. 
According to the assumption, we also have 
$$\sum_{v\in V(G)\setminus S}k=k(|V(G)|-|S|)=|E(G)|=\sum_{v\in V(G)\setminus S}d_G(v)-|E(G\setminus S)|.$$
Therefore, $G\setminus S$ its size must be $\sum_{v\in V(G)\setminus S}(d_G(v)-k)$ and hence every vertex of it has in-degree $d_G(v)-k$. Let us orient the remaining edges from $V(G)\setminus S$ to $S$ to obtain an orientation for $G$
 so that every vertex in $S$ has out-degree zero and every vertex in $V(G)\setminus S$ has out-degree $k$.
Obviously, this orientation induces a $k$-star-decomposition for $G$.

Now, assume that $G$ has a $k$-star-decomposition.
 Obviously, the number of stars must be $|E(G)|/k$. Since $G$ has maximum degree at most $2k-1$, every vertex is the center of at most one star. Thus the number of center vertices must be $|E(G)|/k$. 
If we set $S$ to be the set of all non-center vertices, then this set must be an independent set and we must have $|S|=|V(G)|-|E(G)|/k$. 
Let us orient the edges of $G$ such that the edges of every star directed away from the center. 
This implies that every vertex $V(G)\setminus S$ has in-degree at most $d_G(v)-k$ in $G$ and so does in $G\setminus S$. By Lemma~\ref{lem:Hakimi}, for every $A\subseteq V(G)\setminus S$, $e_G(A)\le \sum_{v\in V(G)\setminus S}(d_G(v)-k)$.
Hence the proof is completed.
}\end{proof}
The following corollary is a useful tool to show that why the left graph in Figure~\ref{Figure:21} does not have a claw-decomposition.
More precisely, it has a unique independence set of size $|V(G)|/3$ (up to isomorphism).
\begin{cor}\label{cor:claw:criterion}
{Let $G$ be a $4$-regular graph of size divisible by three. Then $G$ admits a claw-decomposition if and only if it has an independent set $S$ of size $|V(G)|/3$ such that every component of $V(G)\setminus S$ contains exactly one cycle.
}\end{cor}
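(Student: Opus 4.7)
The plan is to derive the corollary as a clean specialization of Theorem~\ref{thm:criterion} to the case $k=3$, and then to repackage the Hakimi-type counting condition in purely cycle-theoretic language, exploiting the fact that in a $4$-regular graph the numerical slack is exactly one per non-center vertex.

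First I would verify that the hypotheses of Theorem~\ref{thm:criterion} are met: $G$ is $4$-regular, so $\Delta(G)=4\le 2k-1=5$, and $|E(G)|=2|V(G)|$ is divisible by $k=3$ (since $|V(G)|$ must be divisible by $3$ when $|E(G)|$ is). A direct computation shows that the required size of the independent set is $|V(G)|-|E(G)|/3=|V(G)|-2|V(G)|/3=|V(G)|/3$, matching the statement of the corollary. Moreover, for any $v\in V(G)\setminus S$ we have $d_G(v)-k=4-3=1$, so the inequality in Theorem~\ref{thm:criterion} collapses to the much cleaner assertion
\[
e_G(A)\le |A|\qquad \text{for every }A\subseteq V(G)\setminus S.
\]

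Next I would translate this inequality into the structural condition that every component of $G[V(G)\setminus S]$ contains exactly one cycle. One direction is immediate: if some component contains two edge-disjoint cycles, then taking $A$ to be its vertex set gives a subgraph with strictly more edges than vertices, violating the inequality. For the converse, the condition $e_G(A)\le |A|$ applied component-by-component says each component has at most as many edges as vertices, i.e.\ is either a tree or unicyclic. The key observation that upgrades ``at most one cycle'' to ``exactly one cycle'' is a global edge count: since $|S|=|V(G)|/3$ is independent and $G$ is $4$-regular, the number of edges leaving $S$ is $4|S|=4|V(G)|/3$, so $G[V(G)\setminus S]$ contains $2|V(G)|-4|V(G)|/3=2|V(G)|/3=|V(G)\setminus S|$ edges; vertices equal edges. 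Combined with ``every component has edges $\le$ vertices,'' this forces equality in every component, i.e.\ each component is unicyclic.

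The only subtle point, and the one I expect to have to state carefully, is the matching between the ``every subset'' formulation of Hakimi-type inequalities and the ``every component has exactly one cycle'' formulation; this is really just the standard equivalence between (i) $e(A)\le|A|$ on all subsets of $V(H)$ and (ii) every subgraph of $H$ has a vertex of degree at most $1$, equivalently $H$ has a pseudoforest structure. Once this equivalence is in place together with the global edge count above, the corollary drops out of Theorem~\ref{thm:criterion} in a few lines, with no further combinatorial work required.
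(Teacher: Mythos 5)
Your proposal is correct and follows essentially the same route as the paper: specialize Theorem~\ref{thm:criterion} to $k=3$, note that $d_G(v)-k=1$ so the Hakimi-type condition collapses to $e_G(A)\le|A|$, and then use the global edge count $|E(G\setminus S)|=|V(G)\setminus S|$ to upgrade ``pseudoforest'' to ``every component unicyclic.'' You have also, implicitly and correctly, read the bound in Theorem~\ref{thm:criterion} as $\sum_{v\in A}(d_G(v)-k)$ rather than the sum over all of $V(G)\setminus S$ as misprinted in the paper, which is the reading Lemma~\ref{lem:Hakimi} actually supplies.
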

\begin{proof}
{Apply Theorem~\ref{thm:criterion} and use the fact that a graph $H$ of size $|V(H)|$ satisfies $e_H(A)\le |S|$ for every $A\subseteq V(H)$, if and only if every component of it contains exactly one cycle. Note that if $S$ is an independent set of size $V(G)/3$, then the number of edges of $G\setminus S$ must be $|V(G)\setminus S|$. 
}\end{proof}
%
%
%
%
%
%
%
%
%
%
%

\end{document}